\def\wt#1{\widetilde{#1}}
\def\wh#1{\widehat{#1}}
\newcommand{\R}{{\mathbb R}}
\newcommand{\supp}{{\mbox{supp}}}
\newcommand{\airy}{{e^{-t\partial_x^3}}}
\newtheorem{theorem}{Theorem}
\newtheorem{proposition}{Proposition}
\newtheorem{lemma}{Lemma}
\newtheorem{corollary}{Corollary}
\newtheorem{remark}{Remark}
\newcommand{\px}{\partial_x}
\begin{document}
\title[Bilinear local smoothing estimate]{Bilinear local smoothing estimate for Airy equation}

\author{Soonsik Kwon}\address{Department of Mathematical Sciences\\
Korea Advanced Institute of Science and Technology\\
335 Gwahangno \\ Yuseong-gu, Daejeon 305-701, Republic of Korea}
\email{soonsikk@kaist.edu}

\author{Tristan Roy}\address{Courant Institute of Mathematical Sciences
New York University
251 Mercer Street
New York, NY 10012-1185, USA}\email{troy@cims.nyu.edu}

\begin{abstract}
In this short note, we prove a refinement of bilinear local smoothing estimate to Airy solutions, when the frequency support of two wave are separated. As an application we prove a smoothing property of a bilinear form.

\end{abstract}

\thanks{}
\thanks{} \subjclass[2000]{35Q53} \keywords{bilinear local smoothing estimate, Airy equation}

\maketitle

\noindent The purpose of this short note is to show a refinement of bilinear local smoothing estimate for Airy solutions:
\begin{equation}\label{airy}
\partial_t u + \partial_x^3 u =0. 
\end{equation} 
We denote $\airy u_0$ the linear solution, i.e.
$$ \airy u_0 = \frac{1}{2\pi}\int e^{it\xi^3+i(x-y)\xi} u_0(y)\, dyd\xi.  $$
The Airy equation is the linear part of the generalised Korteweg-de Vries equation: 
\begin{equation}\label{KdV}
\partial_t u + \partial_x^3 u + \partial_x(u^p) =0. 
\end{equation}
Local smoothing phenomenon is due to dispersive nature of linear dispersive equation, and firstly formulated by Kato \cite{Kato}. As like KdV equation, when the nonlinear term has a derivative the (local) smoothing estimate is crucial to build the Picard iteration in the well-posedness theory. Indeed, Kenig-Ponce-Vega \cite{KPV93} developed and used the following local smoothing estimate to obtain the well-posedness. 
\begin{proposition}
We have
 \begin{equation}\label{Eq:local smoothing}
  \|D^\alpha \airy f \|_{L^q_xL^r_t} \lesssim \|f\|_{L^2_x}
 \end{equation}
where $ -\alpha + \frac 1q + \frac 3r =\frac 12$, and $\frac 4q +\frac 2r \le 1 $, except at an end point
 $(q,r)=(\infty,\infty) $. Here $D^\alpha$ is a homogeneous fractional derivative. See Notations.\\
In particular, \begin{equation}\label{Eq:L5L10} \|\airy u_0 \|_{L^5_xL^{10}_t}  \lesssim \|u_0\|_{L^2_x}.  \end{equation}
Moreover, we have the inhomogeneous local smoothing estimate.
\begin{equation}\label{inhomogeneous local smoothing}
  \| \int e^{s \px^3} D^\alpha F(s,x)\, ds  \|_{L^2_x} \lesssim \|F \|_{L^{q'}_xL^{r'}_t}
\end{equation}
where $\frac 1q+ \frac 1{q'} =1 $ and $\frac 1r +\frac 1{r'} =1$ where $\alpha, q$ and $r$ as above.

\end{proposition}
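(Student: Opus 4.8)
\emph{Proof proposal.} The three displayed estimates are not independent, and my plan is to reduce everything to the homogeneous family \eqref{Eq:local smoothing}. Writing $Tf(x,t):=D^\alpha\airy f(x)$, a computation with the space--time $L^2$ pairing identifies the adjoint as $T^*G=\int e^{s\px^3}D^\alpha G(\cdot,s)\,ds$, since $\airy$ is unitary with $(\airy)^*=e^{t\px^3}$ and $D^\alpha$ is self-adjoint. Hence the inhomogeneous bound \eqref{inhomogeneous local smoothing}, $\norm{T^*F}_{L^2_x}\les\norm{F}_{L^{q'}_xL^{r'}_t}$, is exactly dual to \eqref{Eq:local smoothing}, $\norm{Tf}_{L^q_xL^r_t}\les\norm{f}_{L^2_x}$, while \eqref{Eq:L5L10} is the special case $(q,r,\alpha)=(5,10,0)$, which satisfies $-\alpha+\frac1q+\frac3r=\frac15+\frac3{10}=\frac12$ and $\frac4q+\frac2r=1$. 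So it suffices to prove the homogeneous family, which I would do by establishing the two endpoint cases and interpolating.

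The first endpoint is the sharp Kato smoothing estimate at $(q,r)=(\infty,2)$, forced by scaling to take $\alpha=1$: $\norm{\px\airy f}_{L^\infty_xL^2_t}\les\norm{f}_{L^2_x}$. This I would prove directly. Fixing $x$ and taking the Fourier transform in $t$ of $\px\airy f(x,t)=\frac1{2\pi}\int i\xi\,e^{it\xi^3+ix\xi}\wh f(\xi)\,d\xi$, I change variables $\tau=\xi^3$, so that $d\tau=3\xi^2\,d\xi$. By Plancherel in $t$ the factor $|\xi|^2$ produced by $\px$ is cancelled exactly by the Jacobian, giving $\norm{\px\airy f(x,\cdot)}_{L^2_t}^2=c\int|\wh f(\xi)|^2\,d\xi$ independently of $x$; taking the supremum in $x$ finishes this case. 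This is the one step that is genuinely elementary.

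The second endpoint is the maximal--function estimate at $(q,r)=(4,\infty)$, $\alpha=-\frac14$, that is $\norm{D^{-1/4}\airy f}_{L^4_xL^\infty_t}\les\norm{f}_{L^2_x}$, and this is where the real work sits. I would run a $TT^*$ reduction, which leads to the space--time convolution operator with kernel $K(x,t)=\int e^{i(t\xi^3+x\xi)}|\xi|^{2\alpha}\,d\xi$. The phase $\phi(\xi)=t\xi^3+x\xi$ has the two stationary points $\xi=\pm(-x/3t)^{1/2}$ when $xt<0$ and none when $xt>0$, so a van der Corput / stationary--phase analysis produces the pointwise decay of $K$ in $|x|$ and $|t|$; inserting these bounds into a Hardy--Littlewood--Sobolev fractional--integration estimate in the $(x,t)$ variables yields the mixed--norm bound. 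With both endpoints in hand, \eqref{Eq:L5L10} and every point of the segment $\frac4q+\frac2r=1$ follow by complex (Stein) interpolation between them, letting the homogeneous order $\alpha$ vary analytically along the family; the interior region $\frac4q+\frac2r<1$ is then reached by a further interpolation against a sub-critical estimate with extra room.

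The main obstacle, as is typical for local smoothing, is the oscillatory--integral bound for $K(x,t)$: I must control the degeneracy of $\phi''(\xi)=6t\xi$ where the two stationary points coalesce at $\xi=0$ (i.e. near $x=0$), keep the estimate uniform in the homogeneous weight $|\xi|^{2\alpha}$ down to the origin, and check that the resulting decay is consistent with the scaling identity $-\alpha+\frac1q+\frac3r=\frac12$ so that the fractional--integration step closes with the right exponents. The exclusion of $(q,r)=(\infty,\infty)$ is exactly the breakdown of this last step at the scaling--critical corner.
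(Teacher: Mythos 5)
Your proposal targets the one statement in this paper that has no proof in it at all: the Proposition is imported verbatim from Kenig--Ponce--Vega \cite{KPV93}, so there is no in-paper argument to compare against, and I judge your sketch against the standard one. On the parts it covers, it \emph{is} the standard one, and it is correct: the adjoint identification $T^*F=\int e^{s\px^3}D^\alpha F(\cdot,s)\,ds$ (so that \eqref{inhomogeneous local smoothing} follows from \eqref{Eq:local smoothing} by H\"older alone, for every admissible pair), the Plancherel computation after the change of variables $\tau=\xi^3$ for the Kato endpoint $(q,r,\alpha)=(\infty,2,1)$, and the $TT^*$ / van der Corput / fractional-integration scheme for the maximal endpoint $(q,r,\alpha)=(4,\infty,-\frac14)$ (note the Hardy--Littlewood--Sobolev step acts in $x$ alone, with the kernel bound $|K(x,t)|\les |x|^{-1/2}$ uniform in $t$ and the input integrated in time) are exactly how Kenig--Ponce--Vega proceed. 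Stein interpolation between the two endpoints, with $\alpha$ varying affinely, yields the whole critical line $\frac4q+\frac2r=1$, and this already contains everything the present paper ever invokes: \eqref{Eq:L5L10} (your check $(q,r,\alpha)=(5,10,0)$ is right) and the exponents of Theorem \ref{Th:bilinear}, for which $(\frac1q,\frac1r)=(\frac\theta6+\frac{1-\theta}4,\frac\theta6)$ gives $\frac4q+\frac2r=1$ identically.

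The genuine gap is the interior region $\frac4q+\frac2r<1$, which the Proposition also claims. In the $(\frac1q,\frac1r)$-plane the asserted set is the triangle with vertices $(0,\frac12)$, $(\frac14,0)$, $(0,0)$ minus the last vertex; interpolating your two endpoints produces only the hypotenuse, and since the missing vertex $(q,r)=(\infty,\infty)$ is precisely where the estimate fails, no interpolation among the estimates you have proved can move off that segment toward the corner. Your proposed fix, ``a further interpolation against a sub-critical estimate with extra room,'' is circular as stated: any estimate with $\frac4q+\frac2r<1$ and the scaling relation $-\alpha+\frac1q+\frac3r=\frac12$ is itself a member of the family being proved, so it must be obtained by genuinely different means, which you do not supply. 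A concrete repair for the interior: starting from a point $(q_1,r)$ on the critical line with exponent $\alpha_1$, write $D^{\alpha_1-s}\airy f=c\,D^{-s}\brk{D^{\alpha_1}\airy f}$, bound the Riesz potential pointwise in the $L^r_t$-norm by Minkowski's integral inequality, and apply scalar fractional integration in $x$; this gives the estimate at $(\frac1{q_1}-s,\frac1r)$ with $\alpha=\alpha_1-s$, preserves the scaling relation, and sweeps out the interior together with the edge $r=\infty$, $4<q<\infty$. Even with this, the remaining edge $q=\infty$, $2<r<\infty$ is reached neither by your interpolation nor by the Sobolev device (which fails into $L^\infty$), so as written your argument does not establish the Proposition on its full stated range; either give a separate argument there or restrict the claim to the line $\frac4q+\frac2r=1$, which is all this paper needs.
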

\vspace{5mm}
\noindent In this note, we consider the interaction of two Airy wave when the support of frequencies are separated and show an improved version of bilinear local smoothing estimate.
\begin{theorem}\label{Th:bilinear}
Let $M,N >0$. Then,
  \begin{equation}\label{Eq:bilinear local smoothing1}
  \|D^\alpha \airy f D^\alpha \airy g \|_{L^{q/2}_x L^{r/2}_t} \lesssim \left(\frac{M}{N}\right)^{5 \theta/ 12} \| f \|_{L^2_x} \|g\|_{L^2_x}
  \end{equation}
  where
  $$ -\alpha +\frac 1q +\frac 3r =\frac 12, (\frac 1q,\frac 1r) = (\frac \theta6 +\frac{1-\theta}{4}, \frac \theta6), 0\le \theta \le 1 $$
 for all $L^2$-functions $f$ and $g$ with $ \text{supp} \, \widehat{f} \subset \{\xi:  |\xi| \le 2M \}$ and $ \text{supp \,} \widehat{g} \subset \{\xi: N \le |\xi|\le 2N \}, 0\le M\le N . $\\
\noindent In particular, we have
  \begin{equation}\label{Eq:bilinear local smoothing2}
    \|\airy f \airy g \|_{L^{5/2}_xL^{5}_t} \lesssim \left(\frac{M}{N}\right)^{1/4}\|f\|_{L^2_x}\|g\|_{L^2_x}
  \end{equation}
\end{theorem}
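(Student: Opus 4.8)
The plan is to prove the estimate at three distinguished mixed‑norm exponents---the ``vertices'' $L^2_xL^2_t$, $L^\infty_xL^2_t$ and $L^2_xL^\infty_t$ of the product---and then to recover the whole one‑parameter family \eqref{Eq:bilinear local smoothing1} by (analytic) bilinear interpolation. Throughout I may assume $M\le N/4$: in the complementary range $N/4<M\le N$ one has $(M/N)^{5\theta/12}\sim 1$, and the bound follows from \eqref{Eq:local smoothing} together with H\"older, with no gain. I would also first reduce $f$ to a single dyadic block $|\xi|\sim M$, since the gain $(M'/N)^{5\theta/12}$ is geometrically summable over dyadic $M'\le M$; this also legitimises the Bernstein inequalities used below (in particular for $\alpha<0$, where the homogeneous $D^\alpha$ is singular at the origin).

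The three vertex estimates I would establish (stated first without derivatives) are
\begin{gather*}
\|\airy f\,\airy g\|_{L^2_xL^2_t}\les N^{-1}\|f\|_{L^2}\|g\|_{L^2},\\
\|\airy f\,\airy g\|_{L^\infty_xL^2_t}\les M^{1/2}N^{-1}\|f\|_{L^2}\|g\|_{L^2},\\
\|\airy f\,\airy g\|_{L^2_xL^\infty_t}\les M^{1/4}N^{1/4}\|f\|_{L^2}\|g\|_{L^2}.
\end{gather*}
The first two are genuinely bilinear and rest on the change of variables $(\xi_1,\xi_2)\mapsto(\xi_1+\xi_2,\xi_1^3+\xi_2^3)$, whose Jacobian $3|\xi_1^2-\xi_2^2|$ is $\sim N^2$ by the separation $|\xi_1|\le 2M\ll N\le|\xi_2|$. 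For the first, Plancherel in $(x,t)$ turns the square of the left side into $\int\!\!\int|\widehat f(\xi_1)|^2|\widehat g(\xi_2)|^2/|\xi_1^2-\xi_2^2|\,d\xi_1\,d\xi_2$, and the Jacobian lower bound gives $N^{-1}$. For the second (a bilinear Kato‑type smoothing estimate) I would, for fixed $x$, apply Plancherel in $t$ and the coarea formula along the curves $\{\xi_1^3+\xi_2^3=\tau\}$: such a curve has length $\sim M$ inside the frequency support while $|\nabla(\xi_1^3+\xi_2^3)|\sim N^2$, and carrying out the $\tau$‑integration produces the extra factor $M^{1/2}N^{-1}$ (the supremum over $x$ being attained at $x=0$). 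The third vertex needs no separation: it follows from the endpoint $(q,r)=(4,\infty)$ of \eqref{Eq:local smoothing}, the Bernstein bounds $\|\airy f\|_{L^4_xL^\infty_t}\les M^{1/4}\|f\|_{L^2}$ and $\|\airy g\|_{L^4_xL^\infty_t}\les N^{1/4}\|g\|_{L^2}$, and H\"older in $x$.

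Inserting $D^\alpha$ multiplies each vertex bound by $M^\alpha$ (acting on $f$, whose frequency is $\sim M$) and by $N^\alpha$ (acting on $g$), again by Bernstein. I would then interpolate the three resulting estimates with weights $(\lambda_1,\lambda_2,\lambda_3)=(\theta/3,\theta/3,1-2\theta/3)$, which are nonnegative for $0\le\theta\le1$. A direct computation shows that the mixed‑norm exponents combine to $2/q=\tfrac12-\tfrac\theta6$ and $2/r=\tfrac\theta3$, i.e.\ exactly the pair of \eqref{Eq:bilinear local smoothing1}, while the powers of $M$ and of $N$ combine as a weighted geometric mean to $M^{\alpha+1/4}N^{\alpha+1/4-5\theta/6}$; using $\alpha=\tfrac{5\theta}{12}-\tfrac14$ this is precisely $(M/N)^{5\theta/12}$. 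The interpolation is cleanest as Stein interpolation for the analytic family $z\mapsto D^{\alpha(z)}\airy f\,D^{\alpha(z)}\airy g$, which simultaneously transports the Lebesgue exponents and the operator norms, including the admissible weights $M^{\alpha},N^{\alpha}$. Taking $\theta=3/5$ gives $(q,r)=(5,10)$, $\alpha=0$ and gain $(M/N)^{1/4}$, which is \eqref{Eq:bilinear local smoothing2}.

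The main obstacle is the second vertex, the bilinear Kato estimate $\|\airy f\,\airy g\|_{L^\infty_xL^2_t}\les M^{1/2}N^{-1}\|f\|_{L^2}\|g\|_{L^2}$. It is the only input carrying gain \emph{beyond} the $N^{-1}$ of the $L^2_xL^2_t$ estimate, and it is exactly this surplus that pushes the exponent up to $5\theta/12$: a naive interpolation of the $L^2_xL^2_t$ bilinear bound against the linear Strichartz estimate $\|D^{1/6}\airy f\|_{L^6_{x,t}}\les\|f\|_{L^2}$ yields only the weaker gain $(M/N)^{1/4}$ at the $L^3_{x,t}$ endpoint instead of the sharp $(M/N)^{5/12}$. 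Making the coarea computation rigorous---in particular controlling the (at most two) branches of the change of variables and the behaviour near $\xi_1=0$, the latter absorbed by the dyadic reduction in $f$---is the technical heart of the argument.
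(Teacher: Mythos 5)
Your proposal is correct, and it takes a genuinely different route from the paper. The paper scales to $N=1$ and interpolates only \emph{two} estimates: the maximal-type bound \eqref{Eq:bilinear 1} (your third vertex, i.e.\ $\theta=0$) and the endpoint \eqref{Eq:bilinear 2} at $\theta=1$; all of the gain beyond trivial H\"older is extracted at that single endpoint via a Bourgain-style almost-orthogonality argument --- $g$ is split into frequency intervals of length $M^{1/2}$, the pieces $\airy f\,\airy g_k$ are shown to have essentially disjoint space-time frequency supports (Lemma \ref{Le:orthogonality}), each piece is handled by the bilinear Hausdorff--Young bound \eqref{Eq:hausdorff-young} with $p=4$, and this yields \eqref{Eq:bilinear4}, hence \eqref{Eq:bilinear3} by interpolation against the $p=2$ case. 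Your substitute for this orthogonality mechanism is the bilinear Kato estimate $\|\airy f\,\airy g\|_{L^\infty_xL^2_t}\les M^{1/2}N^{-1}\|f\|_{L^2}\|g\|_{L^2}$, proved by Plancherel in $t$, Cauchy--Schwarz along the level curves $\fbrk{\xi_1^3+\xi_2^3=\tau}$ (length $\les M$ inside the support, $|\nabla(\xi_1^3+\xi_2^3)|\sim N^2$ there), and the coarea formula; I checked this argument and the exponent arithmetic, and both are sound: your three vertex bounds combine with weights $(\theta/3,\theta/3,1-2\theta/3)$ to give exactly $(M/N)^{5\theta/12}$, your first vertex is the paper's \eqref{Eq:hausdorff-young} at $p=2$ with the $N^{-1}$ tracked, and all three vertices are sharp on the example of Remark \ref{Re:counter example}. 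What your route buys is a shorter, more self-contained proof with no decomposition of $g$ and no overlap counting, and it isolates the true source of the extra gain (bilinear Kato smoothing); what the paper's route buys is the reusable refinement machinery (Lemma \ref{Le:lemma 1}(b) and Lemma \ref{Le:orthogonality}) in the tradition of Bourgain's refined Strichartz estimates. Two points to tighten: (i) Stein interpolation is unnecessary --- since $\theta$, hence $\alpha$, is fixed, Bernstein turns $D^\alpha$ into the scalar factors $M^\alpha, N^\alpha$, after which your three bounds concern one and the same function, and the weighted geometric-mean inequality $\|F\|_{L^p_xL^s_t}\le\prod_i\|F\|_{L^{p_i}_xL^{s_i}_t}^{\lambda_i}$ is just H\"older (log-convexity of mixed norms, valid with $\infty$ exponents); (ii) your dyadic summation over blocks $M'\le M$ loses geometric summability precisely at $\theta=0$ (and the constant degenerates as $\theta\to0$), but there $\alpha=-1/4$ and no decomposition is needed, since the claim is exactly \eqref{Eq:bilinear 1} and follows from \eqref{Eq:local smoothing} with $D^{-1/4}$ kept inside the norm --- so handle $\theta=0$, or a neighborhood of it if you want uniform constants, by that direct argument.
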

\vspace{5mm}
\noindent In the space-time frequency space, the linear wave is supported on the characteristic curve $\tau=\xi^3$. Due to the curvature (or the slope of the tangent line) of the interaction of two linear waves at different frequencies is weaker and so one can have some gain.
\begin{remark}
This type of estimate is firstly shown by Bourgain for symmetric Strichartz estimate of Schr\"odinger equation in $d=2$ \cite{Bourgain98}.
Keraani-Vargas \cite{Keraani-Vargas} extended to other dimension for symmetric Strichartz norms and Chae-Cho-Lee \cite{Chae-Cho-Lee} for
non-symmetric norms.
\end{remark}
\begin{remark}\label{Re:counter example}
  The exponent in Theorem~\ref{Th:bilinear} is sharp. \\
  Define $\widehat{f} = \chi_{M\le \xi \le 2M}$ and $ \widehat{g}=\chi_{1 \le \xi \le 1+M^{1/2}}$. Consider a subset $K$ of $\R\times\R$ of $(t,x)$
  $$ K=\{(t,x) : |x+3t| \le \frac{1}{100}M^{-1/2}, |x|\le \frac{1}{100}M^{-1} \}. $$
  One can easily observe that for all $(t,x) \in K$,
  $$ | \airy f(x)|=|\int_M^{2M} e^{it\xi^3+ix\xi} d\xi| \sim M$$
  and $$ | \airy g(x)| =|\int_{1}^{1+M^{1/2}} e^{it\xi^3+ix\xi} d\xi| \sim M^{1/2}. $$
  Thus,
  \begin{align*}
  \|D^\alpha \airy f A^\alpha \airy g \|_{L^{\frac q2}_xL^{\frac r2}_t} &\ge M^{1}M^{\frac 12}M^\alpha \|\chi_{K} \|_{L^{q/2}_xL^{r/2}_t} \\
                         & \sim M^{\frac 32+\alpha}M^{-\frac 12\cdot \frac 2r}M^{-1\cdot \frac 2q} \\
                         & \sim M^{1+\frac 2r - \frac 1q} =M^{\frac{5\theta}{12} +\frac 34}
  \end{align*}
  where used admissible condition of exponents. \\
Since $\|f\|_{L^2_x} =M^{1/2}$ and $\|g\|_{L^2_x}=M^{1/4}$, we see the estimate \eqref{Eq:bilinear local smoothing1} is sharp.

\end{remark}
\vspace{5mm}
\begin{proof}[Proof of Theorem \ref{Th:bilinear}]
\noindent \newline Since \eqref{Eq:bilinear local smoothing1} is a scaling invariant estimate, by scaling one can assume $N=1$. In view of
\eqref{Eq:local smoothing}, we may also assume $M \ll 1$. \eqref{Eq:bilinear local smoothing1} follows by interpolating the following two
estimates:
\begin{equation}\label{Eq:bilinear 1}
  \|D^{-1/4}\airy fD^{-1/4} \airy g \|_{L^2_xL^\infty_t} \lesssim \|f\|_{L^2_x}\|g\|_{L^2_x}
\end{equation}
\begin{equation}\label{Eq:bilinear 2}
 \|D^{1/6}\airy f D^{1/6} \airy g\|_{L^3_{x,t}} \lesssim M^{5/12}\|f\|_{L^2_x}\|g\|_{L^2_x}
\end{equation}
\eqref{Eq:bilinear 1} is an immediate result of \eqref{Eq:local smoothing}. Now we prove \eqref{Eq:bilinear 2}. Using Bernstein's inequality and
observing frequency support of $f$ and $g$, we are reduced to show that
\begin{equation}\label{Eq:bilinear3}
\|\airy f \airy g\|_{L^3_{x,t}} \lesssim M^{1/4}\|f\|_{L^2_x}\|g\|_{L^2_x}.
\end{equation}
\eqref{Eq:bilinear3} is derived from the following lemma: indeed, it follows from the interpolation of
\eqref{Eq:hausdorff-young}(p=2) and \eqref{Eq:bilinear4}

\end{proof}

\begin{lemma}\label{Le:lemma 1}
  Assume that $f$ and $g$ are functions such that $ \text{supp} |\widehat{f}| \subset [0,2M], \text{supp} |\widehat{g}| \subset [1,2]$, $ M \ll 1$  \\
 (a) Let $p\ge2$. Then we have
  \begin{equation}\label{Eq:hausdorff-young}
    \|\airy f\airy g \|_{L^p_{x,t}} \lesssim \|\widehat{f}\|_{L^{p'}_\xi}\|\widehat{g}\|_{L^{p'}_\xi}
  \end{equation}
  where $p'= \frac{p}{p-1}.$ \\
 (b) \begin{equation}\label{Eq:bilinear4}
   \|\airy f\airy g\|_{L^4_{x,t}} \lesssim M^{3/8}\|f\|_{L^2_x}\|g\|_{L^2_x}
 \end{equation}
\end{lemma}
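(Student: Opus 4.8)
The plan is to pass everything to the space--time Fourier side, where the product $\airy f\cdot\airy g$ is explicit and the separation hypotheses $\supp\widehat f\subset[0,2M]$, $\supp\widehat g\subset[1,2]$ with $M\ll1$ make a certain change of variables nondegenerate. Writing $F=\airy f$, $G=\airy g$, a direct computation gives
\[
  F(x,t)\,G(x,t)=\frac{1}{(2\pi)^2}\int\!\!\int e^{ix(\xi_1+\xi_2)+it(\xi_1^3+\xi_2^3)}\,\widehat f(\xi_1)\,\widehat g(\xi_2)\,d\xi_1\,d\xi_2 .
\]
I would introduce $\Phi(\xi_1,\xi_2)=(\eta,\sigma):=(\xi_1+\xi_2,\ \xi_1^3+\xi_2^3)$, whose Jacobian is $\det D\Phi=3(\xi_2^2-\xi_1^2)$. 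On $[0,2M]\times[1,2]$ one has $|\det D\Phi|\sim1$ (since $\xi_2\sim1$ while $\xi_1\le2M\ll1$), and $\Phi$ is injective there, because $(\eta,\sigma)$ determines the elementary symmetric functions of $\{\xi_1,\xi_2\}$ and the two frequency ranges are disjoint. Consequently the space--time Fourier transform of $FG$ is $\wt{FG}(\eta,\sigma)=c\,\widehat f(\xi_1)\widehat g(\xi_2)/|\det D\Phi|$ evaluated at $(\xi_1,\xi_2)=\Phi^{-1}(\eta,\sigma)$, supported on the strip $S:=\Phi([0,2M]\times[1,2])$.

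For part (a) I would apply the Hausdorff--Young inequality $\|H\|_{L^p_{x,t}}\lesssim\|\wt H\|_{L^{p'}_{\eta,\sigma}}$ (valid for $p\ge2$) to $H=FG$, and then change variables back to $(\xi_1,\xi_2)$:
\[
  \|\wt{FG}\|_{L^{p'}_{\eta,\sigma}}^{p'}=\int\!\!\int\big|\widehat f(\xi_1)\widehat g(\xi_2)\big|^{p'}\,|\det D\Phi|^{1-p'}\,d\xi_1\,d\xi_2 .
\]
Since $|\det D\Phi|^{1-p'}\sim1$ on the support, the right-hand side factorizes as $\|\widehat f\|_{L^{p'}}^{p'}\|\widehat g\|_{L^{p'}}^{p'}$, giving \eqref{Eq:hausdorff-young}. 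No power of $M$ appears, precisely because the Jacobian is comparable to $1$; this is the only role the frequency separation plays in (a).

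For part (b) I would first reduce to $L^2$. By Plancherel, $\|FG\|_{L^4_{x,t}}^2=\|(FG)^2\|_{L^2_{x,t}}=\|\mu*\mu\|_{L^2_{\eta,\sigma}}$, where $\mu:=\wt{FG}$. Writing $\mu*\mu(\zeta)=\int_{\R^2}\mu(\omega)\mu(\zeta-\omega)\,d\omega$, the $\omega$--integrand is supported on $S\cap(\zeta-S)$, so Cauchy--Schwarz and then integration in $\zeta$ give
\[
  \|\mu*\mu\|_{L^2}^2\le\Big(\sup_{\zeta}\,|S\cap(\zeta-S)|\Big)\,\|\mu\|_{L^2}^4 .
\]
Here $\|\mu\|_{L^2}^2=\|FG\|_{L^2_{x,t}}^2\lesssim\|f\|_{L^2}^2\|g\|_{L^2}^2$ by the case $p=2$ of \eqref{Eq:hausdorff-young}. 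Thus the whole estimate reduces to the geometric bound $\sup_\zeta|S\cap(\zeta-S)|\lesssim M^{3/2}$, which then yields $\|FG\|_{L^4}^2\lesssim M^{3/4}\|f\|_2^2\|g\|_2^2$, i.e.\ \eqref{Eq:bilinear4}.

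The main obstacle is exactly this count for $|S\cap(\zeta-S)|$, and it is where the curvature enters. The set $S$ is a strip of vertical thickness $\sim M$ about the curve $\sigma=\eta^3$, $\eta\in[1,2]$, while $\zeta-S$ is a strip of the same thickness about $\sigma=\zeta_2-(\zeta_1-\eta)^3$; note $\zeta_1\sim1$ for the intersection to be nonempty. Their vertical separation is governed by
\[
  \eta^3+(\zeta_1-\eta)^3-\zeta_2=3\zeta_1\Big(\eta-\tfrac{\zeta_1}{2}\Big)^2+\Big(\tfrac{\zeta_1^3}{4}-\zeta_2\Big),
\]
where \emph{the cubic terms cancel}, leaving a quadratic in $\eta$ with leading coefficient $\sim1$. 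The overlap of the two strips is contained in $\{\eta:|3\zeta_1(\eta-\zeta_1/2)^2+(\zeta_1^3/4-\zeta_2)|\lesssim M\}$; a quadratic with coefficient $\sim1$ has superlevel set $\{|\cdot|\le M\}$ of measure $\lesssim M^{1/2}$ \emph{uniformly} in the vertex height $\zeta_1^3/4-\zeta_2$, the worst case being tangency (vertex height $O(M)$). Multiplying this $\eta$--length $\lesssim M^{1/2}$ by the strip thickness $\sim M$ gives $|S\cap(\zeta-S)|\lesssim M^{1/2}\cdot M=M^{3/2}$, completing the argument. I expect the delicate bookkeeping to be making ``within $\sim M$ of the curve'' precise and uniform in $\zeta$ (so that both the thickness and the uniform $M^{1/2}$ superlevel bound hold simultaneously); the remaining steps are routine changes of variables.
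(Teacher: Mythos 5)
Your part (a) is the paper's own argument verbatim in spirit: the change of variables $(\xi_1,\xi_2)\mapsto(\xi_1+\xi_2,\xi_1^3+\xi_2^3)$, injectivity and Jacobian $\sim 1$ from the frequency separation, Hausdorff--Young, and the change back. Your part (b), however, is correct but follows a genuinely different route. The paper decomposes $g=\sum_k g_k$ into pieces of frequency width $M^{1/2}$, proves the almost-orthogonality estimate \eqref{Eq:orthogonality} by showing the space--time Fourier supports of the products $\airy f\,\airy g_k\,\airy f\,\airy g_j$ overlap boundedly, and then applies \eqref{Eq:hausdorff-young} with $p=4$ to each piece, using $\|\wh{f}\|_{L^{4/3}}\lesssim M^{1/4}\|\wh{f}\|_{L^2}$ and $\|\wh{g_k}\|_{L^{4/3}}\lesssim M^{1/8}\|\wh{g_k}\|_{L^2}$ before summing in $k$. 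You instead run the classical bilinear-restriction argument with no decomposition at all: write $\|\airy f\,\airy g\|_{L^4}^2=c\,\|\mu\ast\mu\|_{L^2}$ for $\mu=\wt{\airy f\,\airy g}$ supported in the curved strip $S$, apply Cauchy--Schwarz with the overlap bound $\sup_\zeta|S\cap(\zeta-S)|\lesssim M^{3/2}$, and invoke the $p=2$ case of \eqref{Eq:hausdorff-young}; your geometric count is sound, including the uniform sublevel estimate (via $\sqrt{B}-\sqrt{A}\le\sqrt{B-A}$) that handles the tangent case. It is worth noting that the key algebraic input is the same in both proofs: your cancellation of cubic terms leaving the nondegenerate quadratic $3\zeta_1(\eta-\zeta_1/2)^2$ is exactly the paper's identity $4\xi_1^3+4\xi_2^3=(\xi_1+\xi_2)^3+3(\xi_1-\xi_2)^2(\xi_1+\xi_2)$, which is what makes the paper's sets $F_{j,k}$ finitely overlapping. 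What your version buys is economy and self-containment: one Cauchy--Schwarz plus one measure count produces the $M^{3/4}$ (hence $M^{3/8}$) directly, with no orthogonality lemma. What the paper's version buys is that the critical scale $M^{1/2}$ is made explicit (matching the extremizing example of Remark~\ref{Re:counter example}), and the modular structure (orthogonality plus Hausdorff--Young on pieces) isolates where curvature is used.
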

\vspace{5mm}
\noindent  Lemma \ref{Le:lemma 1}(a) follows from a classical argument of Fefferman and Stein \cite{Fefferman}. It makes use of Hausdorff-Young inequality. We give a proof in the appendix for the sake of completeness.\\
In order to show \eqref{Eq:bilinear4} we use the example of Remark \ref{Re:counter example}. Decompose $g$ into functions whose
frequency support are on small intervals of length $M^{1/2}$. Indeed, for integer $k$, $ 1\le k \le M^{-1/2}$, set $I_k=[ 1+ (k-1)M^{1/2}, 1+
kM^{1/2}]$ and set $\widehat{g_k} =\widehat{g}\chi_{I_k} $. Then $g=\sum_k g_k$. Then we will use the following orthogonality inequality:
\begin{lemma}\label{Le:orthogonality}
We have
\begin{equation}\label{Eq:orthogonality}
\|\sum_k \airy f\airy g_k \|_{L^4_{t,x}} \lesssim \Big(\sum_k \|\airy f \airy g_k \|^2_{L^4_{t,x}}\Big)^{1/2}
\end{equation}
\end{lemma}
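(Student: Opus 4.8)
The plan is to prove \eqref{Eq:orthogonality} by the standard $L^4$ biorthogonality (almost orthogonality) argument, exploiting that the space-time frequency supports of the products $u_k := \airy f\, \airy g_k$ overlap with bounded multiplicity once paired up. First I record the relevant Fourier supports. Since the space-time Fourier transforms of $\airy f$ and $\airy g_k$ are carried by the curve $\tau=\xi^3$ with $\xi\in[0,2M]$ and $\xi\in I_k$ respectively, and multiplication in $(t,x)$ becomes convolution on the Fourier side, $\widehat{u_k}$ is supported in
\[
  S_k=\{(\xi_1+\xi_2,\ \xi_1^3+\xi_2^3):\ \xi_1\in[0,2M],\ \xi_2\in I_k\},
\]
so that $\widehat{u_k u_l}=\widehat{u_k}\ast\widehat{u_l}$ is supported in the sumset $S_k+S_l$.

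Granting the geometric claim stated below, the inequality follows quickly. Using $\|\sum_k u_k\|_{L^4_{t,x}}^2=\|\sum_{k,l}u_k u_l\|_{L^2_{t,x}}$ and Plancherel,
\[
  \Big\|\sum_k u_k\Big\|_{L^4_{t,x}}^4=\sum_{(k,l),(k',l')}\langle u_k u_l,\ u_{k'}u_{l'}\rangle,
\]
and each inner product vanishes unless $(S_k+S_l)\cap(S_{k'}+S_{l'})\neq\emptyset$. The claim is that for fixed $(k,l)$ only $O(1)$ pairs $(k',l')$ meet this condition. With such bounded overlap, Cauchy--Schwarz (together with the symmetry of the intersection relation and AM--GM) bounds the double sum by $\les\sum_{k,l}\|u_k u_l\|_{L^2_{t,x}}^2$, and Hölder's inequality $\|u_k u_l\|_{L^2_{t,x}}\le\|u_k\|_{L^4_{t,x}}\|u_l\|_{L^4_{t,x}}$ upgrades this to $\big(\sum_k\|u_k\|_{L^4_{t,x}}^2\big)^2$. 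Taking fourth roots yields \eqref{Eq:orthogonality}.

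It remains to prove the overlap claim, which is the heart of the matter and the only place the separation of scales enters. A point of $(S_k+S_l)\cap(S_{k'}+S_{l'})$ produces $\xi_1,\xi_1'\in[0,2M]$, $\xi_2\in I_k$, $\xi_2'\in I_l$ and $\zeta_1,\zeta_1'\in[0,2M]$, $\zeta_2\in I_{k'}$, $\zeta_2'\in I_{l'}$ with $\xi_1+\xi_1'+\xi_2+\xi_2'=\zeta_1+\zeta_1'+\zeta_2+\zeta_2'$ and $\xi_1^3+(\xi_1')^3+\xi_2^3+(\xi_2')^3=\zeta_1^3+(\zeta_1')^3+\zeta_2^3+(\zeta_2')^3$. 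Since the low-frequency factors contribute only $O(M)$ to the first identity and $O(M^3)$ to the second, writing $S=\xi_2+\xi_2'$, $P=\xi_2\xi_2'$ and $S',P'$ for the primed quantities and using $\xi_2^3+(\xi_2')^3=S^3-3SP$, I obtain $S=S'+O(M)$ and then $P=P'+O(M)$. Thus $\{\xi_2,\xi_2'\}$ and $\{\zeta_2,\zeta_2'\}$ are the roots of two monic quadratics whose coefficients differ by $O(M)$; since the roots lie in $[1,2]$, a perturbation analysis of $X^2-SX+P$ shows the two root sets differ by $O(M^{1/2})$, i.e. by at most one cell width $M^{1/2}$. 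Consequently $\{k,l\}$ and $\{k',l'\}$ agree up to $O(1)$, giving the desired $O(1)$ overlap.

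The main obstacle is precisely this last perturbation step, which must be done with care near the diagonal $k=l$, where the discriminant $S^2-4P=(\xi_2-\xi_2')^2\sim(k-l)^2M$ degenerates and the map $(\xi_2,\xi_2')\mapsto(\xi_2+\xi_2',\xi_2^3+\xi_2'^3)$ folds and is no longer a diffeomorphism. One checks, however, that even in the degenerate case an $O(M)$ perturbation of $(S,P)$ moves the roots by at most $O(\sqrt{M})$ (the square root of the coefficient perturbation dominates), so the conclusion persists uniformly. The remaining point to verify is that the thickening scales $O(M)$ and $O(M^3)$ coming from $\supp\widehat f\subset[0,2M]$ are genuinely smaller than the cell spacing $M^{1/2}$, which is exactly where the reduction $M\ll1$ is used.
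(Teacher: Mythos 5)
Your proof is correct and follows essentially the same route as the paper: both are $L^4$ biorthogonality arguments exploiting that the space-time Fourier supports of the products $\airy f\,\airy g_k$ (the curve $\tau=\xi^3$ over $I_k$, thickened by the low-frequency $O(M)$ contribution) are almost disjoint, and your sum/product--discriminant analysis of $X^2-SX+P$ is exactly the paper's identity $4\xi_1^3+4\xi_2^3=(\xi_1+\xi_2)^3+3(\xi_1-\xi_2)^2(\xi_1+\xi_2)$ in disguise, since $(\xi_1-\xi_2)^2=S^2-4P$. The only cosmetic difference is that you expand $\|\sum_k u_k\|_{L^4}^4$ into pairings $\langle u_k u_l, u_{k'}u_{l'}\rangle$ and count the nonvanishing ones, whereas the paper phrases the same fact as a pointwise bounded-overlap statement $\sum_{j,k}\chi_{E_{j,k}}\le C$ for the supports $E_{j,k}$ and then applies Cauchy--Schwarz in $L^2$.
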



\begin{proof} $ $ \\
We write, using Plancherel theorem,
\begin{align*}
  \|\sum_k \airy f\airy g_k\|^2_{L^4_{x,t}} &= \|(\sum_k \airy f \airy g_k)^2\|_{L^2_{t,x}}\\
          &=\|\sum_k \airy f \airy g_k\sum_j\airy f\airy g_j\|_{L^2_{t,x}} \\
          &= \|\sum_{j,k} \widetilde{\airy f}\ast \widetilde{\airy f}\ast \widetilde{\airy g_j}\ast\widetilde{\airy g_k} \|_{L^2_{\tau,\xi}}.
\end{align*}
where $\wt{f(t,x)}(\tau,\xi)$ is the space-time Fourier transform of $f(t,x)$.
We denote by $E_{j,k}$ the support of the function $\widetilde{\airy f}\ast \widetilde{\airy f}\ast \widetilde{\airy g_j}\ast\widetilde{\airy
g_k}. $ We claim that the $E_{j,k}$ are essentially disjoint. In other words, there is a constant $C$, independent of $M$, so
that
\begin{equation}\label{Eq:disjoint}
 \sum_{j,k} \chi_{E_{j,k}} \le C.
 \end{equation}
By this claim, we estimate
\begin{align*}
  \|&\sum_{j,k} \widetilde{\airy f}\ast \widetilde{\airy f}\ast \widetilde{\airy g_j}\ast\widetilde{\airy g_k} \|_{L^2_{t,x}} \\
    & \le C \Big(\sum_{j,k}\|\widetilde{\airy f}\ast \widetilde{\airy f}\ast \widetilde{\airy g_j}\ast\widetilde{\airy g_k}  \|^2_{L^2_{\tau,\xi}}\Big)^{1/2} \\
   & = C \Big(\sum_{j,k}\| \airy f\airy g_k\airy f \airy g_j\|_{L^2_{\tau,\xi}} \Big)^{1/2} \\
   & = C  \Big(\sum_{j,k} \int |\airy f\airy g_k\airy f \airy g_j |^2 \Big)^{1/2} \\
   & = C  \Big(\int (\sum_{k} |\airy f \airy g_k|^2)^2 \Big)^{1/2} \\
    & = C\|\sum_k|\airy f \airy g_k|^2 \|_{L^2} \\
    & \le C \sum_k \| |\airy f\airy g|^2 \|_{L^2} \\
    & = C\sum_k\|\airy f\airy g_k\|^2_{L^4}.
\end{align*}
We are left to show the inequality \eqref{Eq:disjoint}. One can easily see that the support of $\widetilde{\airy g_k}$ is in
$E_k=\{(\tau,\xi): |\xi-kM^{1/2}| \le M^{1/2}, \tau =\xi^3 \}$, and the support of $\widetilde{\airy f} $ is in $\{(\tau,\xi): |\xi| \le 2M,
\tau = \xi^3 \}$. If $(\rho,\eta) \in E_{j,k} $, then there exists $(\xi_1,\xi_2)$ such that $(\xi_1^3,\xi_1) \in E_k, (\xi_2^3,\xi_2)\in E_j, |\rho - \xi_1^3 -\xi_2^3 | \le 4M,$ and $|\eta - \xi_1 -\xi_2|\le 4M$. \\
From the identity $ 4\xi_1^3 +4\xi_2^3 = (\xi_1 +\xi_2)^3 + 3(\xi_1-\xi_2)^2(\xi_1+\xi_2) $, we see that
$$ E_{j,k} \subset F_{j,k} = \{(\rho,\eta): |\eta -(j+k)M^{1/2}| \le 3M^{1/2}, (3|k-j|^2-8)M \le |4\rho -\eta^3| \le (6|k-j|^2+8)M \}. $$
It is easy to verify that the $F_{j,k}$'s overlap only a finite number of times and that this number is bounded by a universal constant.

\end{proof}
\begin{proof}[Proof of \eqref{Eq:bilinear4}]
$  $ \\
In view of \eqref{Eq:orthogonality}, we are reduced to show
$$\Big(\sum_k \|\airy f \airy g_k \|^2_{L^4_{t,x}}\Big)^{1/2} \lesssim M^{3/8}\|f\|_{L^2_x}\|g\|_{L^2_x}.$$
Using \eqref{Eq:hausdorff-young} for $p=4$, and the size of support of $f$ and $g_k$, we estimate
\begin{align*}
  \Big(\sum_k \|\airy f \airy g_k \|^2_{L^4_{t,x}}\Big)^{1/2} &\lesssim \Big(\sum_k \|\widehat{f}\|^2_{L^{4/3}}\|\widehat{g_k}\|^2_{L^{4/3}}\Big)^{1/2} \\
           &\lesssim M^{3/8}\|\widehat{f}\|_{L^2}\big(\sum_k \|\widehat{g_k}\|^2_{L^2}\big)^{1/2} \\
           & = M^{3/8}\|f\|_{L^2} \|g\|_{L^2}
\end{align*}
which concludes \eqref{Eq:bilinear4}.\\

\end{proof}

\noindent As a corollary we can observe the smoothing property of bilinear form. 

\begin{corollary}
  Let $\frac 34 <b < 1$.
  \begin{equation}\label{Eq:bilinear derivative}
  \|\px (\airy u_0 \airy v_0) \|_{L^{5/2}_xL^5_t} \lesssim \|u_0\|_{H^b}\|v_0\|_{H^{1-b}}
  \end{equation}
  \label{cor:BilDer}
\end{corollary}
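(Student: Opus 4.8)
The plan is to reduce the claim to the bilinear estimate \eqref{Eq:bilinear local smoothing2} by a Littlewood--Paley decomposition in the spatial frequency. Write $u_0=\sum_M P_M u_0$ and $v_0=\sum_N P_N v_0$, where $M,N$ range over dyadic numbers and $P_M,P_N$ localize the $x$-frequency to $|\xi|\sim M$ and $|\xi|\sim N$. Since $\px$ commutes with $\airy$ and the product $\airy P_M u_0\,\airy P_N v_0$ has spatial frequency supported in $\{|\xi|\lesssim M+N\}$, inserting a fattened projection at the output frequency shows that applying $\px$ costs at most a factor $\lesssim\max(M,N)$. Thus it suffices to control $\sum_{M,N}\max(M,N)\,\|\airy P_M u_0\,\airy P_N v_0\|_{L^{5/2}_xL^5_t}$. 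For each dyadic pair I would feed the two factors into \eqref{Eq:bilinear local smoothing2}, always placing the lower-frequency factor in the ball-supported slot and the higher-frequency factor in the annulus-supported slot, which produces the gain $(\min(M,N)/\max(M,N))^{1/4}$ and bounds each summand by
\[
\max(M,N)^{3/4}\,\min(M,N)^{1/4}\,\|P_M u_0\|_{L^2}\|P_N v_0\|_{L^2}.
\]

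Next I would perform the dyadic summation, writing $a_M=M^{b}\|P_Mu_0\|_{L^2}$ and $c_N=N^{1-b}\|P_Nv_0\|_{L^2}$, so that $\|a\|_{\ell^2}\sim\|u_0\|_{H^b}$ and $\|c\|_{\ell^2}\sim\|v_0\|_{H^{1-b}}$. On the region $N\le M$, where the smooth factor $u_0$ carries the high frequency, the resulting discrete kernel simplifies to $(N/M)^{\,b-3/4}$, which decays geometrically off the diagonal precisely because $b>3/4$; Schur's test then sums this block to $\|a\|_{\ell^2}\|c\|_{\ell^2}$, as desired, and the assumption $b<1$ enters only to keep $1-b>0$. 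This region is therefore routine once \eqref{Eq:bilinear local smoothing2} is in hand.

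The main obstacle is the complementary region $M\le N$, in which the rough factor $v_0$ sits at the high frequency: here the weight $N^{1-b}$ is too weak for a pure off-diagonal Schur argument to close, and the triangle inequality alone loses too much because $L^{5/2}_xL^5_t$ is not $L^2$-based. The mechanism I would use is almost-orthogonality in the output frequency. Group the summands by the dyadic size $\Lambda$ of the spatial frequency of $\airy P_M u_0\,\airy P_N v_0$; for fixed $\Lambda$ one has $N\sim\Lambda$ and the sum over the low frequencies $M\le\Lambda$ converges by $\sum_{M\le\Lambda}M^{1/4-b}\lesssim 1$ (again using $b>3/4$). One then sums over $\Lambda$ by a Littlewood--Paley square-function estimate in $x$: since both exponents $5/2$ and $5$ exceed $2$, Minkowski's inequality gives $\|\sum_\Lambda F_\Lambda\|_{L^{5/2}_xL^5_t}\lesssim\big(\sum_\Lambda\|F_\Lambda\|_{L^{5/2}_xL^5_t}^2\big)^{1/2}$ for output-frequency-localized pieces $F_\Lambda$. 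This is exactly the orthogonality already exploited in Lemma \ref{Le:orthogonality}, and it also absorbs the diagonal block $M\sim N$, where the bilinear gain degenerates to a constant. Making this square-function step quantitatively compatible with the weak weight on $v_0$ is the delicate part of the argument and is where the interplay between the gain in Theorem \ref{Th:bilinear} and the range $\tfrac34<b<1$ must be used sharply.
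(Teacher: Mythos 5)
Your first region ($N\le M$, i.e.\ the smooth factor $u_0$ at high frequency) is handled exactly as in the paper: there the gain $(\min/\max)^{1/4}$ from \eqref{Eq:bilinear local smoothing2} produces the kernel $(N/M)^{b-3/4}$, summable precisely because $b>3/4$; this coincides with the paper's high--low term $\pi_{hl}$. The genuine gap is the region $M\ll N$, and it is not a fixable one, because by charging the full derivative $\max(M,N)=N$ to the product you are trying to prove a statement that is false. Quantitatively, even granting your square-function step, your block $F_\Lambda$ obeys at best $\|F_\Lambda\|_{L^{5/2}_xL^5_t}\lesssim \Lambda^{b-1/4}\,\Lambda^{1-b}\|P_\Lambda v_0\|_{L^2}\|u_0\|_{H^b}$, and the unbounded factor $\Lambda^{b-1/4}$ means square-summation would require $v_0\in H^{3/4}$ rather than $H^{1-b}$. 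No refinement can remove it: take $\widehat{u_0}=\chi_{[1,2]}$ and $\widehat{v_0}=\chi_{[N,\,N+N^{1/2}]}$, so that $\|u_0\|_{H^b}\sim 1$ and $\|v_0\|_{H^{1-b}}\sim N^{5/4-b}$. On the set $K=\{(t,x): |t|\le \delta N^{-2},\ |x+3tN^2|\le \delta N^{-1/2}\}$ both Airy waves are non-oscillatory, so $|\airy u_0\,\airy v_0|\gtrsim N^{1/2}$ there; the $t$-sections of $K$ have length $\sim N^{-5/2}$ while $x$ ranges over a set of measure $\sim 1$, giving $\|\airy u_0\,\airy v_0\|_{L^{5/2}_xL^5_t}\gtrsim N^{1/2}\cdot N^{-1/2}=1$. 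Since the product has spatial frequency $\sim N$, applying $\px$ contributes a genuine factor $N$, so the left-hand side of \eqref{Eq:bilinear derivative}, read with the Leibniz derivative on the product, is $\gtrsim N$, while the right-hand side is $\sim N^{5/4-b}$; this forces $b\le 1/4$, contradicting $b>3/4$.

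The paper never meets this obstruction because its paraproduct decomposition keeps $\px$ on $u_0$ in all three terms: what is actually proved (and the only reading under which the corollary can hold) is the bound for $(\px\,\airy u_0)\,\airy v_0$, not for $\px$ of the product. With the derivative pinned to $u_0$, the low--high and diagonal terms cost only the factor $2^k$ carried by the $u_0$ frequency, and they close with just H\"older and the linear estimate \eqref{Eq:L5L10}, the kernel being $2^{-(1-b)(j-k)}$ (this is where $b<1$ enters); no bilinear refinement and no output-frequency orthogonality is needed there. The bilinear estimate \eqref{Eq:bilinear local smoothing2} is used only in the high--low term, exactly as in your first region. So the repair is not a cleverer square-function argument but a different reduction: prove the statement with the derivative on $u_0$ only, after which your problematic block becomes the paper's $\pi_{lh}$ and sums routinely.
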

\noindent We have a $\frac 14 -\epsilon $ derivative gain in the bilinear form compared when used \eqref{Eq:local smoothing}. The proof is via the frequency decomposition. In the high-low interaction when a derivative falls to high frequency, the refined bilinear estimate helps to move some derivative to low frequency part. We add the proof in the Appendix. 

\vspace{5mm}

\section*{Appendix}
\begin{proof}[Proof of \eqref{Eq:hausdorff-young}]
\noindent \newline Writing
$$ \airy f \airy g(t,x) = c\iint e^{ix(\xi_1+\xi_2) + it(\xi_1^3 +\xi_2^3)} \wh{f}(\xi_1)\wh{g}(\xi_2)\, d\xi_1d\xi_2, $$
we make a change of variables $(u,v) = (\xi_1+\xi_2, \xi_1^3+\xi_2^3).$ Then we obtain
$$ \airy f \airy g(t,x) = c\iint e^{ixu+itv} \Pi(u,v) \,dudv $$
where $\Pi(u,v) = \wh{f}(\xi_1)\wh{g}(\xi_2) |J^{-1}|$ and $ J=\det \frac{\partial(u,v)}{\partial(\xi_1,\xi_2)}= \frac{1}{3(\xi_2^2-\xi_1^2)}.$ \\
We can view
$$ \airy f \airy g(t,x) = \wh{\Pi}(t,x). $$ Hence, using Hausdorff-Young inequality, for $p\ge 2$,
$$ \|\airy f \airy \|_{L^p_{t,x}}  = \|\wh{\Pi} \|_{L^p_{t,x}} \le \|\Pi \|_{L^{p'}_{t,x}}  $$
where $p'=\frac{p}{p-1}.$ \\
To compute $\|\Pi\|_{L^{p'}}$, we use the fact $ |\xi_1-\xi_2| \ge 1/2 $ (i.e. $|J| \sim 1$) and change variables back to $\xi_1,\xi_2$. Indeed,
\begin{align*}
  \|\Pi\|^{p'}_{L^{p'}} &= \iint \big|\wh{f}(\xi_1)\wh{g}(\xi_2) J^{-1} \big|^{p'} \,dudv \\
                        &= \iint \big|\wh{f}(\xi_1)\wh{g}(\xi_2) \big|^{p'} \big| J^{-1}\big|^{p'}\big|J \big| \,d\xi_1 d\xi_2 \\
                        &\sim \|\wh{f}\|^{p'}_{L^{p'}} \|\wh{g}\|^{p'}_{L^{p'}}.
\end{align*}
\end{proof}

\begin{proof}[Proof of Corollary \ref{cor:BilDer}]
\noindent \newline   We use the Littlewood-Paley operators to decompose into the paraproduct:
  $$ \px \airy u_0 \airy v_0 = \pi_{lh} +\pi_{hh} +\pi_{hl} $$
  where
  \begin{align*}
    \pi_{lh} &= \sum_{N<M} P_N\px \airy u_0 P_M \airy v_0  \\
    \pi_{hh} &= \sum_{N \sim M } P_N\px \airy u_0 P_M \airy v_0 \\
    \pi_{hl} &= \sum_{N>M} P_N\px \airy u_0 P_M \airy v_0.
  \end{align*}
By the triangle inequality, we have
\begin{equation*}
 \|\px (\airy u_0 \airy v_0) \|_{L^{5/2}_xL^5_t} \le \|\pi_{lh}\|_{L^{5/2}_xL^5_t}+  \|\pi_{hh}\|_{L^{5/2}_xL^5_t}+  \|\pi_{hl}\|_{L^{5/2}_xL^5_t}.
\end{equation*}
We estimate term by term. For the first two terms we can use the usual local smoothing estimate since the derivative falls in the low frequency part.
\begin{align*}
\|\pi_{hh}\|_{L^{5/2}_xL^5_t} &\lesssim \sum_{j=-1}^{\infty} \|P_{2^j}\px \airy u_0 P_{2^j}\airy v_0\|_{L^{5/2}_xL^5_t} \\
    &\lesssim \sum_{j=-1}^{\infty} \|\widetilde{P}_{2^j}\px\airy u_0 \|_{L^{5}_xL^{10}_t} \|\widetilde{P}_{2^j}\airy v_0 \|_{L^{5}_xL^{10}_t}  \\
    &\lesssim  \sum_{j=-1}^{\infty} 2^{j}\|\widetilde{P}_{2^j}\airy u_0 \|_{L^{5}_xL^{10}_t} \|\widetilde{P}_{2^j}\airy v_0 \|_{L^{5}_xL^{10}_t}  \\
    &=  \sum_{j=-1}^{\infty}  2^{bj}\|\widetilde{P}_{2^j}\airy u_0 \|_{L^{5}_xL^{10}_t} 2^{j(1-b)}  \|\widetilde{P}_{2^j}\airy v_0 \|_{L^{5}_xL^{10}_t}  \\
    &\lesssim  \sum_{j=-1}^{\infty}  2^{bj}\|\widetilde{P}_{2^j} u_0 \|_{L^{2}} 2^{j(1-b)}  \|\widetilde{P}_{2^j} v_0 \|_{L^2_x} \\
    &\lesssim  \|u_0\|_{H^b} \|v_0\|_{H^{1-b}}
\end{align*}
where $ \widetilde{P}_{2^j} = P_{2^{j-1}} + P_{2^j} +P_{2^{j+1}} $.
\begin{align*}
\|\pi_{lh}\|_{L^{5/2}_xL^5_t} &\lesssim \sum_{j=1}^{\infty} \sum_{k=-1}^{j-1} \|P_{2^k}\px \airy u_0 P_{2^j}\airy v_0\|_{L^{5/2}_xL^5_t} \\
    &\lesssim \sum_{j=1}^{\infty} \sum_{k=-1}^{j-1} \|\widetilde{P}_{2^k}\px\airy u_0 \|_{L^{5}_xL^{10}_t} \|\widetilde{P}_{2^j}\airy v_0 \|_{L^{5}_xL^{10}_t}  \\
    &\lesssim  \sum_{j=1}^{\infty} \sum_{k=-1}^{j-1}  2^{k}\|\widetilde{P}_{2^k}\airy u_0 \|_{L^{5}_xL^{10}_t} \|\widetilde{P}_{2^j}\airy v_0 \|_{L^{5}_xL^{10}_t}  \\
    &=  \sum_{j=1}^{\infty} \sum_{k=-1}^{j-1} 2^{(1-b)(k-j)} 2^{bk}\|\widetilde{P}_{2^k}\airy u_0 \|_{L^{5}_xL^{10}_t} 2^{j(1-b)}  \|\widetilde{P}_{2^j}\airy v_0 \|_{L^{5}_xL^{10}_t}  \\
    &\lesssim  \sum_{j=1}^{\infty} \sum_{k=-1}^{j-1} 2^{(1-b)(k-j)} 2^{bk}\|\widetilde{P}_{2^k} u_0 \|_{L^{2}} 2^{j(1-b)}  \|\widetilde{P}_{2^j} v_0 \|_{L^2_x} \\
    &\lesssim \sum_{i=1}^{\infty} 2^{-(1-b)i} \sum_{j \ge i} 2^{b(j-i)}\|\widetilde{P}_{2^{j-i}} u_0\|_{L^2} 2^{(1-b)j} \|\widetilde{P}_{2^{j}} v_0 \|_{L^2} \\
    &\lesssim  \|u_0\|_{H^b} \|v_0\|_{H^{1-b}}.
\end{align*}
For the last term, the high-low paraproduct, we need to use improved bilinear estimate \eqref{Eq:bilinear local smoothing2}.
\begin{align*}
\|\pi_{hl}\|_{L^{5/2}_xL^5_t} &\lesssim \sum_{j=1}^\infty \sum_{k=-1}^{j-1} \|P_{2^j}\airy \px u_0 P_{2^k} \airy v_0 \|_{L^{5/2}_xL^5_t}  \\
                            &\lesssim \sum_{j=1}^\infty \sum_{k=-1}^{j-1} 2^{(k-j)/4} 2^{j}\|P_{2^j} u_0\|_{L^2} \|P_{2^k} v_0\|_{L^2} \\
                            &= \sum_{j=1}^\infty \sum_{k=-1}^{j-1} 2^{(k-j)(b-3/4)} 2^{jb}\|P_{2^j} u_0\|_{L^2}2^{kb}\|P_{2^k} v_0\|_{L^2} \\
                            &\lesssim_b \|u_0\|_{H^b}\|v_0\|_{H^{1-b}}
\end{align*}
where we used Bernstein's inequality, Cauchy-Schwartz inequality, and \eqref{Eq:bilinear local
smoothing2}.
\end{proof}

\subsection*{Notations}
We use space-time mixed norm notation:
$$ \|u(t,x)\|_{L^q_xL^r_t} := \Big(\int \big(\int |u(t,x)|^r dt\big)^{q/r} dx \Big)^{1/q}. $$
We denote the fractional derivative as $ \widehat{D^s f}(\xi) = |\xi|^s \widehat{f}(\xi) $ and the Sobolev norm as
$$  \|f\|_{H^s} = \| \langle D\rangle^s f \|_{L^2} $$
where $\langle \xi\rangle= |\xi|+1$ and $\widehat{f}$ is the Fourier transform of $f$.
We use $X \lesssim Y$  to denote the estimate $X \le C Y$ where $C$ depends only on the fixed parameters and exponents. We shall need the
following Littlewood-Paley projection operators. Let $\phi(\xi)$ be a bump function, $\supp\, \phi \in \{ |\xi| \le 2\} $ and $\phi(\xi) =1 $ on $
\{ |\xi| \le 1\}$. For each dyadic number $N=2^j, j \in \mathbb{N}$,
\begin{align*}
 \wh{P_N f}(\xi) = (\phi(\xi/N) - \phi(2\xi/N)) \wh{f}(\xi)  \\
 \wh{P_0 f}(\xi) = (\phi(\xi))\wh{f}(\xi) \\
 \wh{P_{\le N} f}(\xi) = \sum_{M\le N} P_M f(\xi) \\
 \wh{P_{>N} f}(\xi) = \wh{(I-P_{\le N}) f}(\xi)
\end{align*}
and we also use a wider projection operator $\wt{P}_N= P_{N/2} + P_N + P_{2N}$.

\end{document}